\newtheorem{theorem}{Theorem}[section]
\newtheorem{corollary}[theorem]{Corollary}
\newtheorem{lemma}[theorem]{Lemma}
\theoremstyle{definition}
\newtheorem{definition}[theorem]{Definition}
\newtheorem{remark}[theorem]{Remark}
\newtheorem{example}[theorem]{Example}
\theoremstyle{parrafo}
\newcommand{\N}{\mathbb{N}}
\newcommand{\Q}{\mathbb{Q}}
\newcommand{\R}{\mathbb{R}}
\begin{document}

\author{J. M. Aldaz and A. Caldera}
\address{Instituto de Ciencias Matem\'aticas (CSIC-UAM-UC3M-UCM) and Departamento de 
Mate\-m\'aticas,
Universidad  Aut\'onoma de Madrid, Cantoblanco 28049, Madrid, Spain.}
\email{jesus.munarriz@uam.es}
\email{jesus.munarriz@icmat.es}
\address{Departamento de Mate\-m\'aticas, Universidad de Oviedo, 33007, Oviedo, Spain}
\email{alberto.caldera.morante@gmail.com}

\thanks{2020 {\em Mathematical Subject Classification.} 30L99}
\thanks{Key words and phrases: \emph{metric measure spaces, geometrically dobuling metric spaces, superaveraging operators.}}

\thanks{The first named author was partially supported by Grant PID2019-106870GB-I00 of the
MICINN of Spain, by  V PRICIT (Comunidad de Madrid - Spain), and also  by ICMAT Severo Ochoa project 
CEX2019-000904-S (MICINN)}

\thanks{The second named author was supported by ICMAT Severo Ochoa project 
	CEX2019-000904-S (MICINN)}

	\title[]{Boundedness properties of modified averaging operators and geometrically doubling metric spaces}

    %%% ----------------------------------------------------------------------
	
	\maketitle

%%% ----------------------------------------------------------------------
\begin{abstract} We characterize the geometrically doubling condition of a metric space in
terms of the  uniform $L^1$-boundedness of superaveraging operators, where uniform refers to the
existence of bounds independent of the measure being considered.
\end{abstract}

	%%% ----------------------------------------------------------------------
	
\section {Introduction} 
	It is shown in  \cite{Al1} that averaging operators are  $L^1$-bounded on geometrically doubling metric measure spaces. We complement this result,  by presenting 
 a sharp variant of it.
 Call an operator superaveraging (cf. Definition \ref{modaver})
 if integration is performed over balls larger than the ones appearing in the denominators. On geometrically doubling spaces, superaveraging operators are always $L^1$-bounded, with bounds that depend on the geometrically doubling constant of the space and on the expansion factor $t$, but can be taken to be independent of the measure $\mu$, cf. Theorem \ref{geomdoubling}. Furthermore, for any expansion factor $t  > 1$, this type of boundedness entails that the space is geometrically doubling, so 
 in fact we have an operator-theoretic characterization of the geometrically doubling condition.
				
 Boundedness on $L^1 (\mu)$ of a superaveraging operator does not in general  imply boundedness on $L^p (\mu)$ for $p > 1$, cf. Example \ref{smallballs}, so to apply interpolation arguments, it is interesting to know when boundedness on $L^\infty (\mu)$ holds. We shall see that $L^\infty(\mu)$-boundedness of the superaveraging operators for every $r > 0$ and every $t >1$, is equivalent to $\mu$ being doubling almost everywhere, cf. Corollary \ref{doub}.
But in that case the $L^p$ boundedness for $1\le p \le \infty$ follows directly from doubling a.e.; thus,
 $L^1$-$L^\infty$ interpolation is not useful in this context. 

	\section {Definitions and notation} 
	
	We will use $B^{\operatorname{o}}(x,r) := \{y\in X: d(x,y) < r\}$ to denote metrically open balls, \textit{}
	and 
	$B^{\operatorname{cl}}(x,r) := \{y\in X: d(x,y) \le r\}$ to refer to metrically closed balls (``open balls" and ``closed balls" will always be understood in the metric, not the
	topological sense). 
	If we do not want to specify whether balls are open or closed,
	we write $B(x,r)$. But when we utilize $B(x,r)$, we assume that all balls are of the same kind, i.e., all open or all closed.  
	To avoid trivialities, we will always suppose that measures are not identically zero and that metric spaces contain at least two points.

	\begin{definition} A Borel measure is   {\em $\tau$-additive} or {\em $\tau$-smooth}, if for every
		collection  $\{U_\alpha : \alpha \in \Lambda\}$
		of  open sets, 
		$$
		\mu (\cup_\alpha U_\alpha) = \sup_{\mathcal{F}} \mu(\cup_{i=1}^nU_{\alpha_i}),
		$$
		where the supremum is taken over all finite subcollections $\mathcal{F} = \{U_{\alpha_1}, \dots, U_{\alpha_n} \}$ of  $\{U_\alpha : \alpha \in \Lambda\}$.
		We say that $(X, d, \mu)$ is a {\em metric measure space} if
		$\mu$ is a  $\tau$-additive  Borel measure on the metric space $(X, d)$, such that $\mu$ assigns finite measure to bounded Borel sets.
	\end{definition} 
We are only interested in locally bounded Borel measures, since we want balls to have finite measure, so from now on we assume that this is always the case. 
Next  we present some motivation regarding our use of $\tau$-additive measures.
Trivially all Borel measures on a separable metric space are $\tau$-additive, and likewise all Radon measures (that is, measures that are inner regular with respect to the compact sets) on arbitrary metric spaces. In fact, if the standard axioms ZFC for set theory  are consistent, then it is consistent to assume that all
locally finite measures are $\tau$-additive (see \cite[Proposition 7. 2. 10]{Bo} for more details). 

All of this means that  the assumption of $\tau$-additivity does not rule out from the definition of metric measure spaces any metric space that can be proven to exist using ZFC. Note, for instance, that the requirement of separability  excludes spaces such as $X = L^\infty ([0,1], \lambda)$, where $\lambda$ is the one-dimensional Lebesgue measure, and $d(f,g) = \|f - g\|_\infty$. For us this is a perfectly good metric space. Note also that in the theory of stochastic processes, probabilities on spaces of functions come up naturally. So there is no a priori reason to remove from consideration  ``large'' metric spaces of functions endowed with locally finite measures, or even just probabilities. 
 
Next we recall the standard example of a measure that is not $\tau$-additive, under the assumption that measurable cardinals do exist (measurable cardinals  are so large that their existence cannot be proven from the standard axioms ZFC of set theory, unless these axioms are  inconsistent; more precisely, measurable cardinals serve as models of ZFC, therefore proving ZFC's consistency, so G\"odel's Second Incompleteness Theorem applies).
Let $X$ have measurable cardinality, and let $d$ be  the standard $\{0, 1\}$-valued discrete metric, so all subsets are open. By definition of measurable cardinal there exists a $\{0, 1\}$-valued Borel measure  that takes the value 0 on each singleton, and assigns measure 1 to $X$. Clearly all finite unions of singletons have measure zero and the union of all singletons has measure 1.

\vskip .2 cm

	Recall that 
	the complement of
	the support $(\operatorname{supp}\mu)^{\operatorname{c}} := \cup \{ B^{\operatorname{o}}(x, r): x \in X, \mu (B^{\operatorname{o}}(x,r)) = 0\}$
	of a Borel  measure
	$\mu$,  is an open set, and hence measurable. 
	
	\begin{definition}\label{supp} Let $(X, d)$ be a metric space and let
		$\mu$ be a locally finite Borel measure on $X$. 
		If $\mu (X \setminus \operatorname{supp}\mu) = 0$, 
		we say that $\mu$ has {\em full support}. 
	\end{definition}
	
	By $\tau$-additivity,  if $(X, d, \mu)$ is  a metric measure space, then 
	$\mu$ has full support,
	since $X \setminus \operatorname{supp}\mu $ is a union of open balls of measure zero.
	Actually, the other implication also holds, for the support is always separable,  so  having full support is equivalent to
	$\tau$-additivity (cf. \cite[Proposition 7. 2. 10]{Bo} for more details).

		\begin{definition}\label{modaver} Let  $(X, d, \mu)$ be a metric measure space and let $g$ be  a locally integrable function 
	on $X$. The modified 
	averaging operators $A_{t, r, \mu}$ acting on $g$ are defined as follows: for each pair of positive numbers $t, r > 0$ and each $x\in \operatorname{supp}\mu$, set
	\begin{equation}\label{avop}
		A_{t, r , \mu} g(x) := \frac{1}{\mu
			(B(x, r))} \int _{B(x, tr)}  g (y) \ d\mu (y).
	\end{equation}
We shall speak of superaveraging operators if $t > 1$,  and of  subaveraging  operators when $t < 1$.
\end{definition}

\begin{remark} 
Note that  modified averaging operators $A_{t, r, \mu}$ acting on, say $g$, yield functions defined  almost everywhere,  by
$\tau$-additivity: if for some $y\in X$ the function $A_{t, r, \mu} g$ is not defined at $y$, it is because $\mu (B(y,r) ) = 0$, from whence it follows that $\mu (B^{\operatorname{o}}(y,r)) = 0$, and thus $y \in X \setminus \operatorname{supp}\mu $. 
\end{remark}
	
	 Sometimes it is convenient to  specify whether balls are open or closed; in that case,
	we use $A_{t,r , \mu}^{\operatorname{o}} $ and $A_{t,r , \mu}^{\operatorname{cl}} $ for the corresponding operators. 
	Furthermore, when we are considering only one measure $\mu$ we often omit it, 
	writing $A_{t, r } $ instead of the longer $A_{t, r , \mu} $. 
	
	%%% ----------------------------------------------------------------------
	\section{Some background results}
	
		\begin{definition} \label{conjugate} Let $t > 0$. We call 
		\begin{equation*}\label{conjug}
			a_{t,r, \mu} (y)  
			: =
			\int_{X}  \frac{\mathbf{1}_{B(y,tr)}(x)}{\mu (B(x,r))}  \  d\mu(x) 
		\end{equation*}
		the {\em conjugate function} to the modified averaging operator $A_{t, r, \mu}$.
	\end{definition}
As before, when we consider only one measure $\mu$,  we will often 
write $a_{t, r } $ instead of $a_{t, r , \mu} $.

 \vskip .2 cm

Averaging operators are bounded on $L^1(\mu)$ if and only if the corresponding conjugate functions $a_{r, \mu} \in L^{\infty}(\mu)$, by \cite[Theorem 3.3]{Al1}.  The same argument (which   we include for the reader's convenience) shows that this characterization also holds for the modified averaging operators.

\begin{theorem}\label{Thrmconj} 
	Let $t > 0$ and let $(X,d,\mu)$ be a metric measure space. The averaging operator $A_{t,r}$ is bounded on $L^1(\mu)$ if and only if $a_{t,r} \in L_{\infty}(\mu)$, in which case $\left \|  A_{t,r}\right \|_{L^1(\mu) \rightarrow L^1(\mu)} = \left \| a_{t,r} \right \|_{\infty}$.
\end{theorem}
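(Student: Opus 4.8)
The plan is to derive the single identity
\[
\|A_{t,r} g\|_{L^1(\mu)} = \int_X g \, a_{t,r} \, d\mu
\]
valid for every nonnegative $g \in L^1(\mu)$, from which both implications and the equality of norms will follow by the standard $L^1$–$L^\infty$ duality. First I would reduce to nonnegative $g$: since $|A_{t,r}g| \le A_{t,r}|g|$ pointwise (apply the triangle inequality inside the integral defining $A_{t,r}$) and $A_{t,r}$ sends nonnegative functions to nonnegative functions, one gets $\|A_{t,r}g\|_1 \le \|A_{t,r}|g|\,\|_1$ with $\||g|\|_1 = \|g\|_1$. Hence the operator norm is already attained (as a supremum) over the nonnegative cone, and it suffices to work there.

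For nonnegative $g$, I would write $\|A_{t,r}g\|_1 = \int_X A_{t,r}g(x)\,d\mu(x)$, substitute the definition of $A_{t,r}$, and interchange the order of integration. The key algebraic point is the symmetry of the kernel: because $d$ is symmetric and all balls are taken to be of the same type, $\mathbf{1}_{B(x,tr)}(y) = \mathbf{1}_{B(y,tr)}(x)$; after swapping the integrals one therefore recognizes exactly the integral defining $a_{t,r}(y)$ in Definition \ref{conjugate}, which yields the displayed identity.

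Granting the identity, the two directions are immediate. If $a_{t,r} \in L^\infty(\mu)$, the identity gives $\|A_{t,r}g\|_1 \le \|a_{t,r}\|_\infty \|g\|_1$ for nonnegative $g$, hence for all $g$ by the first step, so $\|A_{t,r}\|_{L^1(\mu)\to L^1(\mu)} \le \|a_{t,r}\|_\infty$. Conversely, if $\|A_{t,r}\|_{L^1(\mu)\to L^1(\mu)} = M < \infty$, then $\int_X g\, a_{t,r}\,d\mu \le M\|g\|_1$ for every nonnegative $g \in L^1(\mu)$. Were $\{a_{t,r} > M\}$ of positive measure, it would contain a subset $F$ of finite positive measure, because $X$ is a countable union of bounded, hence finite-measure, sets; testing the inequality against $g = \mathbf{1}_F$ would give $\int_F a_{t,r}\,d\mu > M\mu(F) = M\|g\|_1$, a contradiction. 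Thus $a_{t,r} \le M$ $\mu$-a.e., i.e. $\|a_{t,r}\|_\infty \le M$. Combining both estimates gives the asserted equality of norms.

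The step demanding the most care is the interchange of integrals. I would justify it by Tonelli's theorem: the integrand $\mathbf{1}_{B(x,tr)}(y)/\mu(B(x,r))$ is nonnegative and jointly Borel measurable, since $\{(x,y): d(x,y)<tr\}$ (resp. $\le tr$) is open (resp. closed) in $X\times X$, while $x \mapsto \mu(B(x,r))$ is measurable by the lower (resp. upper) semicontinuity of the measure of open (resp. closed) balls. The requisite $\sigma$-finiteness also holds: by $\tau$-additivity the whole computation takes place on $\operatorname{supp}\mu$, which is separable and hence a countable union of finite-measure balls. This is the only genuinely technical point; everything else is formal.
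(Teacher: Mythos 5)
Your proof is correct and takes essentially the same route as the paper's: the Fubini--Tonelli identity $\|A_{t,r}g\|_{L^1(\mu)} = \int_X g\, a_{t,r}\, d\mu$ for nonnegative $g$, H\"older's inequality for the bound $\|A_{t,r}\|_{L^1(\mu)\to L^1(\mu)} \le \|a_{t,r}\|_\infty$, and testing against indicators of finite-measure subsets of the super-level set of $a_{t,r}$ for the reverse inequality. The only differences are presentational: you make explicit the reduction to the nonnegative cone and the measurability and $\sigma$-finiteness justifications for Tonelli (via separability of $\operatorname{supp}\mu$), which the paper leaves implicit.
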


\begin{proof}
	Let $0 \le f \in L^1(\mu)$, and suppose  $a_{t,r} \in L^\infty (\mu)$. Since by
	Fubini-Tonelli
	\begin{equation*}\label{fubini}
		\|A_{t,r} f\|_{L^1} 
		=
		\int_X A_{t,r} f(x)  \  d\mu(x)
		=
		\int_X\int_X  \frac{\mathbf{1}_{B(x,rt)}(y) }{\mu (B(x,r))} f (y) \  d\mu(y) \  d\mu(x)
	\end{equation*}
	\begin{equation*}\label{fubini2}
		= 
		\int_X  f (y) \int_X  \frac{\mathbf{1}_{B(y,rt)}(x)}{\mu (B(x,r))}  \  d\mu(x) \  d\mu(y)
		= 
		\int_X  f (y) \ a_{t,r}(y) \  d\mu(y),
	\end{equation*}\\
	it follows from H\"older's inequality that $\|A_{t,r} \|_{L^1(\mu)\to L^1(\mu)} \le \|a_{t,r}\|_\infty. 
	$

	On the other hand, we claim that if $\|A_{t,r} \|_{L^1(\mu)\to L^1(\mu)} \le C$, then $\|a_{t,r}\|_\infty
	\le C. 
	$
	Towards a contradiction, suppose $C < \|a_{t,r}\|_\infty$ (including the case $\|a_{t,r}\|_\infty
	= \infty$). Then there is a $\lambda > C$ and a measurable set $A_\lambda$ such that 
	$A_\lambda \subset \{ a_{t,r}> \lambda\}$ and $0 < \mu (A_\lambda)  < \infty$. Let $f:= \mathbf{1}_{A_\lambda} \in L^1(\mu)$.
	Then 
	\begin{equation*}\label{fubini3}
		\|A_{t,r} f\|_{L^1} 
		=
		\int_X  f (y) \  a_{t,r}(y) \  d\mu(y)
		>  \int_{A_\lambda}  \lambda  \  d\mu(y)
		= \lambda \mu (A_\lambda) > C \|f\|_{L^1},
	\end{equation*}
thereby contradicting the assumption $\|A_{t,r} \|_{L^1(\mu)\to L^1(\mu)} \le C$.
\end{proof}

\begin{corollary}\label{sub} 
	Let $t \in (0,1/2]$ and let $(X,d)$ be a metric space. Then for every locally finite, $\tau$-additive Borel measure $\mu$ on $X$, the subaveraging operator $A_{t,r, \mu}$ satisfies  $\left \|  A_{t,r, \mu}\right \|_{L^1(\mu) \rightarrow L^1(\mu)} \le 1$.
\end{corollary}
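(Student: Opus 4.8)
The plan is to invoke the conjugate-function characterization already established in Theorem \ref{Thrmconj}, which reduces the claim to the purely pointwise estimate $\|a_{t,r,\mu}\|_\infty \le 1$. Indeed, once we know that $a_{t,r,\mu}(y) \le 1$ for ($\mu$-almost) every $y$, Theorem \ref{Thrmconj} gives $\|A_{t,r,\mu}\|_{L^1(\mu)\to L^1(\mu)} = \|a_{t,r,\mu}\|_\infty \le 1$ immediately, with no further work. So the entire content of the corollary lies in bounding the conjugate function, and the restriction $t \le 1/2$ should be exactly what makes this bound hold.

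The geometric heart of the matter is a single application of the triangle inequality. Recall that
\[
a_{t,r,\mu}(y) = \int_{B(y,tr)} \frac{1}{\mu(B(x,r))}\, d\mu(x),
\]
so the integration variable $x$ ranges over $B(y,tr)$. I would first observe that whenever $x \in B(y,tr)$ and $t \le 1/2$, one has the inclusion $B(y,tr) \subseteq B(x,r)$: for any $z \in B(y,tr)$,
\[
d(z,x) \le d(z,y) + d(y,x) \le tr + tr = 2tr \le r,
\]
where the middle step uses $x,z \in B(y,tr)$ and the last uses $t \le 1/2$. In the closed-ball case this gives $d(z,x)\le r$, and in the open-ball case the strict inequalities propagate to give $d(z,x) < r$; either way $z \in B(x,r)$, so the inclusion holds for both conventions. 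Consequently $\mu(B(x,r)) \ge \mu(B(y,tr))$ for every $x$ in the domain of integration, whence the integrand satisfies $1/\mu(B(x,r)) \le 1/\mu(B(y,tr))$ on $B(y,tr)$.

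Integrating this pointwise bound over $B(y,tr)$ then yields, provided $\mu(B(y,tr)) > 0$,
\[
a_{t,r,\mu}(y) \le \int_{B(y,tr)} \frac{1}{\mu(B(y,tr))}\, d\mu(x) = \frac{\mu(B(y,tr))}{\mu(B(y,tr))} = 1,
\]
which is the desired estimate. The only remaining point, and the one small subtlety I would flag rather than a genuine obstacle, is the degenerate case $\mu(B(y,tr)) = 0$: there the integral defining $a_{t,r,\mu}(y)$ is taken over a $\mu$-null set and so equals $0 \le 1$, with the usual convention that integration over a null set vanishes. Collecting the two cases gives $a_{t,r,\mu}(y) \le 1$ for all $y$, hence $\|a_{t,r,\mu}\|_\infty \le 1$, and the corollary follows from Theorem \ref{Thrmconj}. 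I do not anticipate a hard step here; the argument is essentially the triangle-inequality inclusion together with the conjugate-function reduction, and the value $t = 1/2$ is visibly the borderline at which $2tr \le r$ still holds.
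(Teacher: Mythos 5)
Your proof is correct and follows essentially the same route as the paper: reduce via Theorem \ref{Thrmconj} to showing $\|a_{t,r,\mu}\|_\infty \le 1$, then observe that $x \in B(y,tr)$ and $t \le 1/2$ force $B(y,tr) \subset B(x,r)$ by the triangle inequality, so the integrand is at most $1/\mu(B(y,tr))$ and the integral is at most $1$. You merely spell out details the paper leaves implicit (the open/closed ball conventions and the degenerate case $\mu(B(y,tr)) = 0$), which is fine.
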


\begin{proof} Note that if $\mathbf{1}_{B(y,rt)}(x) = 1$, then $B(y,rt) \subset B(x,r)$, since $t \le 1/2$. Thus, for every
$y \in X$,  we have $a_{t,r, \mu} (y) \le 1$.
\end{proof}

	%%% ------------------------------------------------ ----------------------------------------------------------------------
	\section{Geometrically doubling metric spaces}
	
	\begin{definition} Let $(X, d)$ be a metric space. A {\em strict $r$-net} (resp. {\em non-strict $r$-net}) 
		in $X$ is a subset $S \subset X$ such that for any pair of distinct points $x,y \in S$, we have $d(x,y) > r$  (resp. $d(x,y)  \ge r$). 
	\end{definition} 
	
	We  speak of an {\em $r$-net}   if we do not want to specify whether it is strict or not.
	To ensure disjointness of the balls $B(x,r/2)$,  $r$-nets are always  taken to be strict when working with closed balls;
	otherwise, we assume $r$-nets are
	non-strict.
	
	\begin{definition} \label{geomdoub} A metric space is {\it geometrically doubling}  if there exists a positive
		integer $D$ such that every ball of radius $r$ can be covered with no more than $D$ balls
		of radius $r/2$.  We call the smallest such $D$ the {\em geometrically doubling constant} of the space.
	\end{definition}
	
		We use $D^{\operatorname{o}}$ and $D^{\operatorname{cl}}$  to refer to the geometrically doubling constants for open and for closed balls.
	It is easy to see, by enlarging balls slightly, that the geometrically doubling condition is satisfied for open balls if and only if
	it is satisfied for closed balls, but the constants will in general be different. 
	By analogy with previous notation, we use  $M^{\operatorname{cl}}$   and $M^{\operatorname{o}}$ for 
	the maximum sizes of strict and non-strict $r$-nets
	respectively, in balls of radius $r$.

	\begin{remark} \label{rnetbounds}
		Let $X$ be geometrically doubling with constant $D$, and let $M_{2}$ be the maximum cardinality of an $r/2$-net in $B(x, r)$,  taken over 
		all $x\in X$ and all  $r > 0$. It easily follows from a maximality argument that $D \le M_{2}$: the balls of radius $r/2$ centered at each of the points of a maximal $r/2$-net must cover all of $B(x,r)$, for any point not covered could be added to the supposedly maximal $r/2$-net. On the other hand, if $M$ is the maximum cardinality of an $r$-net in $B(x, r)$,  taken over 
		all $x\in X$ and all  $r > 0$, then $M \le D$: a ball of radius $r/2$ can contain at most one point of an $r$-net. 
\end{remark}
				
\begin{example} With the preceding notation, it may happen that $D < M_{2}$. Consider the open balls case on $\R$. Note that 
		$D^{\operatorname{o}} = 3$, since $(-1,1) = \{0\} \cup (-1,0) \cup (0,1)$, so $(-1,1)$ can be covered with 3 balls of radius $1/2$, and no less than 3. However, $M^{\operatorname{o}}_{2} >3$: the set $\{-6/10, -1/10, 4/10, 9/10\}$ is a $1/2$-net in $(-1,1)$, and the
		balls of radius 1/2 centered at these points 
		give us a cover of $(-1,1)$ for which the minimal cardinality is not obtained.
\end{example}

	\begin{example} \label{geomdoub} To clarify 
		the meaning of the geometrically  doubling constant $D$,  note 
		first that if $D = 1$ then $X$ has only one point, 
		a case excluded by assumption,
		so $D \ge 2$.
		
		Consider the special case $X= \mathbb{R}^d$ with the euclidean distance, 
		and $\mu = \lambda^d$, the $d$-dimensional Lebesgue measure. By translation and dilation
		invariance, 
		it is enough to consider collections of translates  of  $B(0,1)$, such that the said collections  
		cover $B(0,2)$. When $d = 1$, it has already been noted in the previous example that $D^{\operatorname{o}} = 3$. On the other hand, for closed balls clearly $[-2,2] = [-2, 0] \cup [0,2]$, so $D^{\operatorname{cl}} = 2$.

When $d \ge 2$ we have that $2^d < D < 5^d$. 
The inequalities $2^d < D <  5^d$ follow from
		well known volumetric arguments.  Since $ \lambda^d (B(0,2)) = 
		2^d \lambda^d  B(0,1)$, at least $2^d$ translates of $B(0,1)$ are needed to cover $B(0,2)$.
		Furthermore, since there must be some overlap among the covering balls, we get 
		$2^d < D$. 
		
		To see why $D <  5^d$, let $\{v_1, \dots, v_N\}$ be a maximal  1-net in  $B(0,2) \subset \mathbb{R}^d$. The maximality of 
		$\{v_1, \dots, v_N\}$  entails that
		$B(0,2)   \subset   \cup_1^N B(v_i, 1)$, so $D \le N$. Since the balls $B(v_i, 1/2)$  
		are disjoint, contained in  $B(0, 5/2)$, and do not form a packing of $B(0, 5/2)$, we conclude that $N <  (5/2)^d/(1/2)^d = 5^d$.
		
 If we consider the $\ell^\infty$ norm on  $\mathbb{R}^d$ instead of the euclidean norm when $d \ge 2$,
		so balls are cubes with sides parallel to the axes,
		it is clear that $D^{\operatorname{cl}} = 2^d$, and not difficult to
		see (for instance, by an induction argument) that
		$D^{\operatorname{o}} = 3^d$.
	\end{example}

	Statements 1) and 2) of the next lemma can be found as part of \cite[Lemma 2.3]{Hy}. For ease in the applications we have added statements 3) and 4).
	The ceiling function 
	$\lceil  t \rceil$ is defined as the least integer $n$ satisfying $t\le n$.
	
	\begin{lemma}\label{hyt}  Let $(X, d)$ be a metric space and let $N$ be a fixed positive integer. 
		Each of the following statements implies the next:
		
		1) Every ball $B(x, r)\subset X$ can be covered with at most $N$ balls of radius $r/2$.

		2)  For all $t\in (0,1)$, every ball $B(x, r)\subset X$ can be covered with at most $N^{\lceil - \log_2 t \rceil}$ balls of radius $t r$.
		
		3) There exists a $t\in (1/2, 1)$ such that every ball $B(x, r)\subset X$ can be covered with at most $N$ balls of radius $t r$.
		
		4)  With $t\in (1/2,1)$ as in the preceding statement, every ball $B(x, r)\subset X$ can be covered with at most $N^{\lceil - 1/ \log_2 t \rceil}$ balls of radius
		$r/2$.
					\end{lemma}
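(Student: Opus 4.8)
The plan is to obtain all three implications by a single bootstrapping idea applied at different rates. The covering hypothesis in each statement is scale invariant (it holds for balls of \emph{every} radius), so one may apply it repeatedly at successively smaller scales; and a covering by balls of radius $\rho\le\sigma$ is automatically a covering by balls of radius $\sigma$, since $B(y,\rho)\subseteq B(y,\sigma)$ (this holds for open and for closed balls alike). Only the number of iterations, and the resulting exponent of $N$, change from step to step.

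For 1) $\Rightarrow$ 2) I would iterate the halving. Applying 1) once covers $B(x,r)$ by $N$ balls of radius $r/2$; applying 1) to each of these covers $B(x,r)$ by $N^{2}$ balls of radius $r/4$; after $k$ applications one has $N^{k}$ balls of radius $r/2^{k}$. Given $t\in(0,1)$ I would set $k:=\lceil -\log_2 t\rceil$, the least integer with $2^{-k}\le t$, so that $r/2^{k}\le tr$. Enlarging these $N^{k}$ balls to radius $tr$ produces the covering of 2).

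For 2) $\Rightarrow$ 3) I would simply specialize. For any $t\in(1/2,1)$ one has $-\log_2 t\in(0,1)$, whence $\lceil -\log_2 t\rceil=1$ and $N^{\lceil -\log_2 t\rceil}=N$; thus statement 2) already asserts that every ball is coverable by at most $N$ balls of radius $tr$, which is exactly 3) for this (indeed any) such $t$. For 3) $\Rightarrow$ 4) I would iterate the $t$-contraction: applying 3) $m$ times covers $B(x,r)$ by $N^{m}$ balls of radius $t^{m}r$. I want $t^{m}r\le r/2$, i.e. $t^{m}\le 1/2$; taking base-$2$ logarithms gives $m\log_2 t\le -1$, and since $\log_2 t<0$ for $t<1$ this is equivalent to $m\ge -1/\log_2 t$. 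Hence I would take $m:=\lceil -1/\log_2 t\rceil$ and enlarge the resulting $N^{m}$ balls of radius $t^{m}r\le r/2$ to radius $r/2$, yielding 4).

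I do not expect a genuine obstacle in any of the three steps: the real content is only the scale invariance of the hypothesis together with the elementary count of how many halvings (resp. $t$-contractions) are needed to drop below the target radius. The one point demanding care is the reversal of the inequality when dividing by the negative quantity $\log_2 t$ in the step 3) $\Rightarrow$ 4); beyond that, the sole work is keeping the ceiling-function exponents consistent and invoking the enlargement observation from the first paragraph.
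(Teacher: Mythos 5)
Your proposal is correct and follows essentially the same route as the paper: iterate the covering hypothesis the appropriate number of times, identify that number with the ceiling expression ($k=\lceil -\log_2 t\rceil$ for 1)\,$\Rightarrow$\,2), $m=\lceil -1/\log_2 t\rceil$ for 3)\,$\Rightarrow$\,4)), and obtain 2)\,$\Rightarrow$\,3) by specializing to $t\in(1/2,1)$ where the ceiling equals $1$. The only cosmetic difference is that you make explicit the enlargement observation (a cover by smaller balls is a cover by larger concentric balls), which the paper leaves implicit.
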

	
	\begin{proof} Let us check that  1) implies 2). For
		$0 < t < 1$, let $k$ be the unique positive integer
		that satisfies 
		$2^{-k} \le t < 2^{-k + 1}$. By an inductive argument, every ball $B(x,r)$ can
		be covered with at most $N^{k}$ balls of radius $2^{-k} r \le t r$.
		Since $k \ge - \log_2 t > k - 1$, we have that 
		$k = \lceil - \log_2 t \rceil$. 
		
		Part 3) is a special case of part 2), since for $t\in (1/2, 1)$, we have  
		$1 = \lceil - \log_2 t \rceil$. We mention that part 3) is also immediate from part 1), just take a cover by balls of radius $r/2$ and enlarge the radii to $tr$. 
		
		Finally, for 3) implies 4) we argue as in 1) implies 2): let
	 $k$ be the unique positive integer
		that satisfies 
		$t^{k} \le 1/2  < t^{k - 1}$,  or equivalently, let $k := \lceil - 1/\log_2( t )\rceil$. After covering $B(x,r)$ with at most $N$ balls 	of the form
$B(w, t r)$, by hypothesis each one of these can also be covered by at most $N$ balls of the form $B(v , t^2 r)$. Iterating, we find that $B(x,r)$ can be covered by at most $N^{\lceil -1/\log_2(t )\rceil}$ balls of radius 
$r t^k \le r/2$. 
\end{proof}

Of course, part 4) is an immediate consequence of part 1); deriving part 4) directly from the information given in part 3) yields the same qualitative information (the space is geometrically doubling) 
but quantitatively weaker bounds.

\vskip .2 cm

The subaveraging operators are pointwise dominated by the averaging operators, and these
are uniformly (on $r >0$ and on $\mu$) bounded  when the metric space is geometrically doubling by Theorem \cite[3.5]{Al1}, so the next result deals only
with values of $t > 1$. Essentially it says that the uniform boundedness of the superaveraging operators for some $t > 1$, or for all $t > 1$, is equivalent to the geometrically doubling condition. This is not the case for the averaging operators, where the equivalence is given by a weaker property of Besicovitch type, cf. \cite[Theorem 4.7]{Al2}.

\begin{theorem}  \label{geomdoubling} Let $(X, d)$ be a  
 	metric  space, let $t > 1$ and let
$M_t$ be the supremum of all cardinalities of
  $r$-nets in $B(x, t r)$, where the supremum is taken
over all  $x \in X$ and all $r >0$. 

Each of the following statements implies the next one:

1) The metric  space  $(X, d)$ is geometrically doubling, with geometrically  doubling constant $D$.

2)  For every locally bounded $\tau$-additive Borel measure $\mu$, every $t > 1$ and every $s > 0$, the bound
$\|A_{t, s, \mu} \|_{L^1(\mu)\to L^1(\mu)} \le M_t \le D^{\lceil \log_2 2t \rceil}$ holds.

3)  There exist a $t \in (1,2) $ and a $C = C(t) > 0$ such that for every $s > 0$ and every finite  weighted sum of Dirac deltas $\mu := \sum_{i = 1}^N c_i \delta_{x_i}$, the bound
$\|A_{t, s, \mu} \|_{L^1(\mu)\to L^1(\mu)} \le C$ holds.
	
4) With $t$ and $C$ as in the previous statement, the metric  space  $(X, d)$ is geometrically doubling, with geometrically doubling constant bounded by $C^{\lceil 1/\log_2(t)\rceil}$.
 \end{theorem}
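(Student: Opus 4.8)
The plan is to establish the chain $1)\Rightarrow 2)\Rightarrow 3)\Rightarrow 4)$, of which the two substantial links are $1)\Rightarrow 2)$ (geometric doubling forces uniform boundedness) and $3)\Rightarrow 4)$ (uniform boundedness on Dirac sums forces doubling); the middle link $2)\Rightarrow 3)$ is a pure specialization. Throughout I use Theorem \ref{Thrmconj} to pass between the operator norm $\|A_{t,s,\mu}\|_{L^1\to L^1}$ and the sup norm $\|a_{t,s,\mu}\|_\infty$ of the conjugate function, so that every estimate reduces to a pointwise bound on $a_{t,s,\mu}(y)=\int_{B(y,ts)} d\mu(x)/\mu(B(x,s))$.

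For $1)\Rightarrow 2)$ I first prove the pointwise inequality $a_{t,s,\mu}(y)\le M_t$ for an arbitrary measure by a greedy \emph{peeling} argument. Fix $y$, write $f(x):=\mu(B(x,s))$ and $E_1:=B(y,ts)\cap\operatorname{supp}\mu$. Choose $z_1\in E_1$ minimizing $f$, discard $B(z_1,s)$, set $E_2:=E_1\setminus B(z_1,s)$, and iterate. At step $l$, since $f\ge f(z_l)$ on $E_l$ and $\mu(B(z_l,s))=f(z_l)$,
\[
\int_{E_l\cap B(z_l,s)}\frac{d\mu(x)}{f(x)}\ \le\ \frac{\mu(B(z_l,s))}{f(z_l)}=1 .
\]
The removed sets partition $E_1$, and the centers $z_l$ are pairwise at distance $\ge s$ (each $z_l$ was chosen outside every previously discarded ball), so they form a non-strict $s$-net in $B(y,ts)$ and hence number at most $M_t$; in particular the process terminates. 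Summing the per-step contributions gives $a_{t,s,\mu}(y)\le M_t$. For the second inequality $M_t\le D^{\lceil\log_2 2t\rceil}$, I cover $B(x,tr)$ by balls of radius $r/2$: by part 2) of Lemma \ref{hyt} applied with ratio $1/(2t)\in(0,1)$ this needs at most $D^{\lceil\log_2 2t\rceil}$ balls, and each such ball, in which any two points are less than $r$ apart, contains at most one point of a non-strict $r$-net, whence the packing bound.

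The link $2)\Rightarrow 3)$ is immediate: fix any $t\in(1,2)$ and put $C:=M_t$, which is finite by $2)$; the asserted bound then holds in particular for all finite weighted sums of Dirac deltas. For $3)\Rightarrow 4)$ I run the test-measure construction. Given a ball $B(x_0,R)$, choose a maximal non-strict $(R/t)$-net $\{x_0=x_1,\dots,x_n\}$ in $B(x_0,R)$ containing the center, and set $\mu:=\sum_{i=1}^n\delta_{x_i}$ and $s:=R/t$. Then $ts=R$, so every net point lies in $B(x_0,ts)$, while the separation forces $\mu(B(x_i,s))=1$; hence, by Theorem \ref{Thrmconj},
\[
n=a_{t,s,\mu}(x_0)\le\|a_{t,s,\mu}\|_\infty=\|A_{t,s,\mu}\|_{L^1\to L^1}\le C .
\]
By maximality the balls $B(x_i,R/t)$ cover $B(x_0,R)$, so every ball is covered by at most $C$ balls of radius $(1/t)R$ with $1/t\in(1/2,1)$; this is exactly hypothesis 3) of Lemma \ref{hyt} with $N=C$, and its conclusion 4) yields geometric doubling with constant at most $C^{\lceil -1/\log_2(1/t)\rceil}=C^{\lceil 1/\log_2 t\rceil}$.

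The main obstacle I anticipate is justifying the greedy step of $1)\Rightarrow 2)$ for a general, possibly non-atomic, measure, since the infimum of $f$ over $E_l$ need not be attained. I would handle this by selecting near-minimizers $z_l$ with $f(z_l)\le(1+\varepsilon)\inf_{E_l}f$, which keeps each per-step contribution below $1+\varepsilon$, and by checking that $\inf_{E_l}f>0$ through a packing argument: points with arbitrarily small $f$-value cannot be $s$-separated, so they must cluster and thereby inflate one another's ball measures, a contradiction. A secondary, purely bookkeeping, nuisance running through all parts is the consistent matching of open versus closed balls with strict versus non-strict nets; I would fix open balls together with non-strict nets in $1)\Rightarrow 2)$ and $3)\Rightarrow 4)$, as permitted by the conventions of Section 4.
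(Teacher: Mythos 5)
Your links $1)\Rightarrow 2)$ and $2)\Rightarrow 3)$ are essentially the paper's own argument: the greedy peeling over $B(y,ts)$ with the packing bound $M_t \le D^{\lceil \log_2 2t\rceil}$ from part 2) of Lemma \ref{hyt}, followed by the trivial specialization $C:=M_t$. The $(1+\varepsilon)$ near-minimizer device you defer to your final paragraph is exactly what the paper does, and you are in fact more explicit than the paper about why $\inf_{E_l} f>0$, which the paper merely asserts (``Noticing that $b_1 > 0$'') and delegates to \cite{Al1}; your clustering/total-boundedness sketch is the right justification.

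The genuine gap is in $3)\Rightarrow 4)$: you ``choose a maximal non-strict $(R/t)$-net $\{x_0=x_1,\dots,x_n\}$'', i.e.\ you write the maximal net as a finite set and build the test measure $\mu=\sum_{i=1}^n\delta_{x_i}$ from it. But finiteness of maximal nets is precisely what is at stake: hypothesis 3) applies only to \emph{finite} weighted Dirac sums, and in a non-doubling space (the unit ball of an infinite-dimensional Hilbert space, say) maximal nets inside balls \emph{are} infinite, so your test measure is simply not available until finiteness is proved; assuming it assumes a form of the conclusion. The paper is explicit on this point: it first applies the test-measure argument to an \emph{arbitrary finite} $r/t$-net (adding the center with vanishing weight $c$ and using the test function $f_c=c^{-1}\mathbf{1}_{\{x\}}$, then letting $c\to 0$), deduces that every finite net has cardinality at most $C$, concludes that no infinite net exists, and only then takes a maximal net and uses maximality to get the covering. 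Your proof is repaired by your own computation: any finite subset of a net containing the center is again a net containing the center, so your identity $a_{t,s,\mu}(x_0)=n\le C$ bounds all finite subnets, which forces a maximal net chosen to contain $x_0$ to be finite of cardinality at most $C$; from there your appeal to parts 3) and 4) of Lemma \ref{hyt} and the bound $C^{\lceil 1/\log_2 t\rceil}$ go through as in the paper. Once this is fixed, your version of the test computation is actually a small simplification of the paper's: giving the center weight $1$ as a net point still yields $\mu(B(x_i,R/t))=1$ by separation, so no limit $c\to 0$ is needed.
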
 

 \begin{proof} For 1) implies 2)  we present only a sketch of the proof,  referring the interested reader to the almost identical proof of 
 Theorem \cite[3.5]{Al1}, where a more complete argument can be found. The only difference lies in the fact that since $t > 1$, instead of the bound
 $M \le D$ one obtains $M_t \le D^{\lceil \log_2 2 t \rceil}$. Some details follow. 

Without loss of generality suppose that $X = (\operatorname{supp}\mu)$.  Fix  $y \in X$, and let $0 < \varepsilon \ll  1$. We show that $a_{t,s}(y) \leq M_t$.
Noticing that  $b_1 \coloneqq \text{inf}\{  \mu (B(x,s))  : x \in B(y, ts)\} > 0$, we choose $u_1 \in B(y,ts)$ such that $\mu (B(u_1,s )) < (1 + \varepsilon)b_1$. Let $b_2 \coloneqq \text{inf}\{  \mu (B(x,s))  : x \in B(y,ts)\setminus B(u_1,s) \} $, and select $u_2 \in B(y,ts)\setminus B(u_1,s)$ so that $\mu (B(u_2,s)) < (1 + \varepsilon)b_2$; repeat until the process cannot be continued. This happens in   at most $M_t$ steps, since the selected centers $u_k$ form an $s$-net $S$ in $B(y, ts)$. Denote by $m$ the cardinality of $S$.
	For every $x \in B(y,ts)$ we have
	$$
	\frac{\mathbf{1}_{B(y,ts)}(x)}{\mu (B(x,s))} 
	\leq 
	(1+\varepsilon) \sum_{i=1}^{m} \frac{\mathbf{1}_{B(y,ts)\cap B(u_i,s)}(x)}{\mu (B(u_i,s))}.
	$$
	Integrating,
	$$
	a_{t,s}(y) = \int_{X} 	\frac{\mathbf{1}_{B(y,ts)}(x)}{\mu (B(x,s))} \hspace{2pt}d\mu (x)
	 \leq 
	\int_{X} (1+\varepsilon) \sum_{i=1}^{m} \frac{\mathbf{1}_{ B(u_i,s )}(x)}{\mu (B(u_i,s))} \hspace{2pt}d\mu (x) \leq (1+\varepsilon)M_t.
	$$
Letting $\varepsilon \downarrow 0$, we conclude from Theorem \ref{Thrmconj}  that $\left \|  A_{t,s}\right \|_{L^1(\mu) \rightarrow L^1(\mu)} \leq M_t$.
As in Remark 4.3, we recall that each ball of radius $s/2$ can contain at most one point of an $s$-net, so to bound 
$M_t$ from above it is enough to estimate the number of balls of radius $s/2$ needed to cover $B(y, ts)$. But by part 2) of Lemma \ref{hyt}, we know that $D^{\lceil \log_2 2 t \rceil}$ balls of radius $s/2$ suffice.
 
 Since 2) implies 3) is
 trivial, with $C = M_t$, we only need to show that 3) implies 4), so suppose there is a $t \in (1,2) $
 such that for all discrete measures $\mu$ with finite support, and all $s > 0$, we have $\|A_{t, s, \mu} \|_{L^1(\mu)\to L^1(\mu)} \le C$. By part 3) of Lemma \ref{hyt}, to obtain the geometrically doubling condition it is enough to show that one can cover the large ball with a fixed number of smaller balls, where the radius is contracted not necessarily by 1/2, but by any number strictly less than 1. The assumption $t < 2$ is for mere convenience: of course, if we have boundedness for some $t \ge 2$, we also have it for $t < 2$. 
 
 Note that before we can take an $s$-net of maximal cardinality, we need to show that it is not possible to have an infinite $s$-net inside a ball.  To this end, select a ball $B(x,r)$ and a finite $r/t$-net $S := \{y_1,\dots,  y_m\} \subset  B(x,r)$.  For $0 < c \ll 1$ define 
$\mu_c :=  c  \delta_x + \sum_{i = i}^m \delta_{y_i}$, and set $f_c := c^{-1} \mathbf{1}_{\{x\}}$.  Then $\|f_c\|_{L^1(\mu_c)} = 1$. Next we use the bounds satisfied by the superaveraging operators,  for the given value of $t \in (1,2) $ and $s = r/t$, so by hypothesis  $\|A_{t, r/t, \mu_c}  f\|_{L^1(\mu_c)} \le C.$ Since $S$ is an $r/t$-net, any  ball of radius $r/t$ centered at a point of $S$ contains only one point of $S$, namely its center. These balls also might contain $x$, so for all $j = 1, \dots , m$, we have 
$1 \le \mu_c (B(y_j, r/t)) \le 1 + c$. On the other hand, when we apply the expansion factor $t$ to the radius $r/t$, we find that for every $y_j$, $j = 1, \dots , m$, we do have $x \in B(y_j, r)$. Thus
 	$$
 	1
 	\ge A_{t, r/t, \mu_c}  f (y_j)  \ge 
 	\frac{1}{ 1 + c},
 	$$
	so
	$$
	\frac{m}{ 1 + c} 
	\le
	\|A_{t, r/t, \mu_c} f_c\|_{L^1(\mu_c)}
	\le C,		
	$$
 	and we conclude that $m\le C$ by letting $c \to 0$. Now every finite $r/t$-net in $B(x,r)$ has cardinality bounded by $C$, so there are no  $r/t$-nets of infinite cardinality contained in $B(x,r)$. Choose a maximal $r/t$-net $S^\prime$
of points in $B(x,r)$, with respect to the partial order given by inclusion. By maximality of $S^\prime$ we conclude that 
$\{B(w, r/t) : w\in S^\prime\}$ is a cover of $B(x,r)$, of cardinality bounded by $C$. Using part 4) of Lemma \ref{hyt},
we see that the geometrically  doubling constant of the space is bounded by $C^{\lceil 1/\log_2(t)\rceil}$.
 \end{proof}  
  
  The next corollary restates the ``extrapolation'' part of the preceding result:  boundedness of the superaveraging operators for one single $t > 1$ implies the boundedness for all $t >1$, thought 
of course the bounds will be larger for larger values of $t$.
  	
	\begin{corollary}  \label{extrapolation} Let $(X, d)$ be a  
		metric  space. If there exist a  $t > 1$ and a $C = C(t) > 0$ such that for every $s > 0$ and every finite  weighted sum of Dirac deltas $\nu := \sum_{i = 1}^N c_i \delta_{x_i}$, the bound
		$\|A_{t, s, \nu} \|_{L^1(\nu)\to L^1(\nu)} \le C$ holds, then for	every $t > 1$ there exists a constant $C(t) > 0$ such that for every locally bounded $\tau$-additive Borel measure $\mu$ and every $s > 0$, the bound
		$\|A_{t, s, \mu} \|_{L^1(\mu)\to L^1(\mu)} \le C(t)$ holds.	
		\end{corollary}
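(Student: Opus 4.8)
The plan is to read this off directly from the chain of implications in Theorem \ref{geomdoubling}. The key conceptual point is that the geometrically doubling condition is a property of the metric space $(X,d)$ alone, with no reference to any measure: once we extract it from the hypothesis, we may feed it back into the implication 1)$\Rightarrow$2) to obtain boundedness uniformly over \emph{all} locally bounded $\tau$-additive Borel measures, not merely the discrete ones appearing in the hypothesis.

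First I would reconcile the range of $t$. The hypothesis supplies a single $t > 1$, whereas statement 3) of Theorem \ref{geomdoubling} is phrased for $t \in (1,2)$. If the given $t$ already lies in $(1,2)$ there is nothing to do; otherwise, fix any $t' \in (1,2)$ with $t' < t$. For nonnegative $g$ we have $B(x, t's) \subseteq B(x, ts)$, so $A_{t',s,\nu} g(x) \le A_{t,s,\nu} g(x)$ pointwise, whence $\|A_{t',s,\nu}\|_{L^1(\nu)\to L^1(\nu)} \le \|A_{t,s,\nu}\|_{L^1(\nu)\to L^1(\nu)} \le C$ for every $s > 0$ and every finite weighted sum of Dirac deltas $\nu$. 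Thus the hypothesis of the corollary yields statement 3) of Theorem \ref{geomdoubling}, with $t'$ in place of $t$ and the same constant $C$.

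Next I would invoke the implication 3)$\Rightarrow$4) of Theorem \ref{geomdoubling} to conclude that $(X,d)$ is geometrically doubling, with geometrically doubling constant $D$ bounded by $C^{\lceil 1/\log_2(t')\rceil}$; in particular $D < \infty$ and $D$ depends only on $(X,d)$, on the chosen $t'$, and on $C$. Finally, I would apply the implication 1)$\Rightarrow$2): for every $t > 1$, every $s > 0$ and every locally bounded $\tau$-additive Borel measure $\mu$, one has $\|A_{t,s,\mu}\|_{L^1(\mu)\to L^1(\mu)} \le M_t \le D^{\lceil \log_2 2t\rceil}$. Setting $C(t) := D^{\lceil \log_2 2t\rceil}$ gives a bound that depends only on $t$ and $D$, hence is independent of $\mu$ and $s$, which is exactly the assertion of the corollary.

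There is no genuine obstacle here, since all the work is carried by Theorem \ref{geomdoubling}; the proof is essentially bookkeeping through its implications. The only two points that warrant a word of care are the monotonicity reduction from an arbitrary $t > 1$ to the range $(1,2)$ required by statement 3), and the observation that the doubling constant $D$ produced in the intermediate step is a purely metric quantity, so that the resulting $C(t)$ inherits its independence from the measure.
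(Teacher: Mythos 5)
Your proposal is correct and is essentially the paper's own argument: the corollary is stated there precisely as a restatement of the ``extrapolation'' content of Theorem \ref{geomdoubling}, obtained by chaining the hypothesis into implication 3)$\Rightarrow$4) to get the geometrically doubling condition (a purely metric property) and then feeding that into 1)$\Rightarrow$2) with $C(t) := D^{\lceil \log_2 2t \rceil}$. Your monotonicity reduction from an arbitrary $t>1$ to some $t' \in (1,2)$ is also the paper's own observation (made inside the proof of the theorem, where it notes that boundedness for some $t \ge 2$ implies boundedness for smaller $t$), so nothing is missing.
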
 	

%------------------------------------------------------------------------------------------------------------

	\begin{example}\label{smallballs} Let $ p > 1$. The boundedness on $L^1(\mu)$ of a superaveraging operator does not imply  the boundedness on $L^p(\mu)$, as the following example shows.
		
	 It is immediate from the characterization using $r$-nets that every subset of a geometrically doubling metric space is geometrically doubling. Let $(X, d)$ be the following subset (hence geometrically doubling) of the euclidean plane with the inherited distance: define $X := \{0, 1/2\}\times \N$. Let $\mu$ be the counting measure on $\{1/2\}\times \N$,  and on $\{0\}\times \N$, set $\mu \{(0,n)\}  = 1/n$. 
	 Writing $f_n := \mathbf{1}_{\{(1/2,n)\}}$,
 for all $p\in [1, \infty]$ we have $\|f_n\|_{p} = 1$.
	 For convenience we shall use closed balls, with
	 $r = 1/4$ and $t= 2$, so when $n>0$, we have
 $A_{2, 1/4 , \mu}^{\operatorname{cl}} f_n (0,n) \ge n/2$. Clearly $A_{2, 1/4 , \mu}^{\operatorname{cl}}$ is not bounded on $L^\infty(\mu)$, and for $1 < p < \infty$, 
 $$
 \|A_{2, 1/4 , \mu}^{\operatorname{cl}} f_n\|_{L^p(\mu)} 
 \ge
 \left(\frac{n^p}{2^p n}\right)^{1/p}, 
 $$
 so $A_{2, 1/4 , \mu}^{\operatorname{cl}}$ is not bounded on $L^p(\mu)$ either.
 
 On the other hand, if $r \ge 1/2$, then $\mu (B^{\operatorname{cl}} (x,r)) \ge 1$ for all $x \in X$, so it easily follows that for every $t > 1$ and every $p \ge 1$ we have boundedness of $A_{t, r , \mu}^{\operatorname{cl}}$ on $L^p(\mu)$.
\end{example}

In view of the preceding example, it is natural to ask under which conditions is $A_{t, r , \mu}$
bounded on $L^\infty (\mu)$, so that interpolation can be carried out. Next we show that boundedness for all $t$ and $r$ is equivalent to $\mu$ being doubling almost everywhere.

			\begin{theorem}\label{prop2} Let $(X, d,\mu)$ be a  metric measure space. 
	Fix $t > 0$ and $r > 0$. Then $\|A_{t, r, \mu}\|_{L^{\infty}(\mu)  \to L^\infty (\mu)} = \left\| \dfrac{\mu (B(\cdot,tr))}{\mu (B(\cdot,r))} \right\|_{L^\infty (\mu)}$. 
\end{theorem}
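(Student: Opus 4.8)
The plan is to establish the claimed identity by proving the two matching inequalities separately, since the operator norm is a supremum while the right-hand side is the essential supremum of an explicit pointwise ratio. Both directions turn out to be short, the key observation being that the constant function is an extremal input.

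First I would prove the upper bound $\|A_{t,r,\mu}\|_{L^\infty(\mu)\to L^\infty(\mu)} \le \left\| \frac{\mu(B(\cdot,tr))}{\mu(B(\cdot,r))} \right\|_{L^\infty(\mu)}$. Take any $g \in L^\infty(\mu)$ with $\|g\|_\infty \le 1$, so that $|g| \le 1$ holds $\mu$-almost everywhere. For each $x \in \operatorname{supp}\mu$, where the operator is defined, the triangle inequality for integrals gives
$$
|A_{t,r,\mu} g(x)| \le \frac{1}{\mu(B(x,r))} \int_{B(x,tr)} |g| \, d\mu \le \frac{\mu(B(x,tr))}{\mu(B(x,r))},
$$
the last step using $|g| \le 1$ $\mu$-a.e. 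Taking the essential supremum over $x$ yields the upper bound. Recall that $A_{t,r,\mu}g$ is defined a.e. by the Remark following Definition \ref{modaver}, so the $L^\infty$-norm is computed correctly on $\operatorname{supp}\mu$.

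Next I would prove the reverse inequality by exhibiting an extremal function, namely the constant function $\mathbf{1}_X \equiv 1$. Since $\mu$ assigns finite measure to bounded Borel sets, $\mathbf{1}_X$ is locally integrable with $\|\mathbf{1}_X\|_\infty = 1$, hence an admissible input of unit norm. A direct computation gives
$$
A_{t,r,\mu} \mathbf{1}_X(x) = \frac{1}{\mu(B(x,r))} \int_{B(x,tr)} 1 \, d\mu = \frac{\mu(B(x,tr))}{\mu(B(x,r))}
$$
for every $x \in \operatorname{supp}\mu$. Hence $\|A_{t,r,\mu}\mathbf{1}_X\|_\infty = \left\| \frac{\mu(B(\cdot,tr))}{\mu(B(\cdot,r))} \right\|_{L^\infty(\mu)}$, and as $\|\mathbf{1}_X\|_\infty = 1$ the operator norm is at least this quantity. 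Combining with the upper bound gives the equality.

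There is no serious obstacle; the only point requiring a little care is the degenerate case in which the ratio has infinite essential supremum. In that event the same test function $\mathbf{1}_X$ shows $A_{t,r,\mu}\mathbf{1}_X \notin L^\infty(\mu)$, so $A_{t,r,\mu}$ fails to map $L^\infty(\mu)$ into itself and both sides of the claimed identity equal $+\infty$; thus the statement holds in this case as well. One should also note at the outset that the bound $|g| \le 1$ $\mu$-a.e. legitimately controls the integral over $B(x,tr)$, which is immediate since integration is performed against $\mu$ itself.
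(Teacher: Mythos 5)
Your proof is correct and follows essentially the same route as the paper's: the lower bound is obtained by testing on the constant function $f \equiv 1$, for which $A_{t,r,\mu}f$ equals the ratio $\mu(B(\cdot,tr))/\mu(B(\cdot,r))$ pointwise on $\operatorname{supp}\mu$, and the upper bound follows from the pointwise estimate $|A_{t,r,\mu}g(x)| \le \frac{\mu(B(x,tr))}{\mu(B(x,r))}\,\|g\|_{L^\infty(\mu)}$. Your extra remarks on the a.e.\ definition of the operator and on the degenerate case of an infinite essential supremum are sound but not needed beyond what the paper's argument already covers implicitly.
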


\begin{proof} To see that  
$$
\left\| \dfrac{\mu (B(\cdot,tr))}{\mu (B(\cdot,r))} \right\|_{L^\infty (\mu)}\leq \|A_{t, r,\mu}\|_{L^\infty (\mu)  \to L^\infty (\mu)},
$$ just take $f \equiv 1$ and note that for every $x \in \operatorname{supp} \mu$, we have 
$$
A_{t, r ,\mu}f(x) = \dfrac{\mu (B(x,tr))}{\mu (B(x,r))}.
$$ 
In order to prove 
$$
\left\| \dfrac{\mu (B(\cdot,tr))}{\mu (B(\cdot,r))} \right\|_{L^\infty (\mu)}
\geq
 \|A_{t, r,\mu}\|_{L^\infty (\mu)  \to L^\infty (\mu)},
$$
select $f \in L^{\infty}(\mu)$ and note that for every $x \in \operatorname{supp} \mu$  we have 		
$$
		| A_{t,r}f(x)| \leq \frac{1}{\mu(B(x, r))} \int _{B(x, tr)}  |f| \ d\mu \leq \frac{\mu(B(x, tr))}{\mu(B(x, r))} \| f \|_{L^\infty (\mu)} \leq 	\left\| \dfrac{\mu (B(\cdot,tr))}{\mu (B(\cdot,r))} \right\|_{L^\infty (\mu)} \| f \|_{L^\infty (\mu)}.
		$$
\end{proof}

\begin{definition}
	Let $(X,d,\mu)$ be a metric measure space. Then  $\mu$ is  {\em doubling} if there exists a constant $C > 1$ such that for all $x \in X$ and all $r > 0$, we have $\mu(B(x,2r)) \leq C\mu(B(x,r))$. If there is a measure zero set $A$ such that the restriction of $\mu$ to $X \setminus A$ is doubling, we say that $\mu$ is {\em doubling almost everywhere}.
\end{definition}	

Examples of measures $\mu$ that are doubling almost everywhere but not doubling are easy to find: just take one  Dirac delta $\delta_x$ at any point  $x\in X$, or let $\lambda$ be the linear Lebesgue measure on the $x$-axis of $\R^2$. It is clear from the definition that  $\mu$ is  doubling almost everywhere if and only if  
$$
\left\| \  \underset{ r > 0}{\sup} \left\{\dfrac{\mu (B(\cdot,2r))}{\mu (B(\cdot,r))} \right\} \ \right \|_{L^\infty (\mu)} < \infty,
$$ 
and  by a standard iteration argument, the $2r$ in the numerator can be replaced by any $t r$, provided $t > 1$. Hence the following result is immediate from Theorem \ref{prop2}.

			\begin{corollary} \label{doub} Let $(X, d,\mu)$ be a  metric measure space. 
	Then  the superaveraging operators $A_{t, r, \mu}$ are bounded on     
	$L^{\infty} (\mu)$ for all $r > 0$ and all $t > 1$, if and only if $\mu$ is doubling almost everywhere. 
\end{corollary}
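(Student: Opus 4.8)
The plan is to reduce both sides of the stated equivalence to a single condition on the pointwise ratios $\rho_{t,r}(x):=\mu(B(x,tr))/\mu(B(x,r))$, $x\in\operatorname{supp}\mu$, and then match them. On one hand, Theorem~\ref{prop2} gives, for every fixed $t>0$ and $r>0$, the identity $\|A_{t,r,\mu}\|_{L^\infty(\mu)\to L^\infty(\mu)}=\|\rho_{t,r}\|_{L^\infty(\mu)}$; hence the hypothesis that the superaveraging operators are bounded for all $r>0$ and all $t>1$ is exactly the assertion that $\|\rho_{t,r}\|_{L^\infty(\mu)}<\infty$ for every such pair. On the other hand, the equivalence recorded just before the statement says that $\mu$ is doubling almost everywhere precisely when $\big\|\sup_{r>0}\rho_{t,r}\big\|_{L^\infty(\mu)}<\infty$ for one, and hence every, $t>1$. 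So it suffices to prove that these two $L^\infty$-conditions on the family $\{\rho_{t,r}\}$ are equivalent.

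The implication from doubling almost everywhere is the routine one, and I would dispatch it first. Assuming $\mu$ doubling a.e., fix $t>1$ and $r>0$. Since $\rho_{t,r}(x)\le\sup_{r'>0}\rho_{t,r'}(x)$ for every $x\in\operatorname{supp}\mu$, passing to essential suprema yields $\|\rho_{t,r}\|_{L^\infty(\mu)}\le\big\|\sup_{r'>0}\rho_{t,r'}\big\|_{L^\infty(\mu)}<\infty$, and Theorem~\ref{prop2} then makes $A_{t,r,\mu}$ bounded on $L^\infty(\mu)$. As $t$ and $r$ were arbitrary, all superaveraging operators are bounded.

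The substantive direction is the converse, and here is where I expect the real work to lie. Starting from $\|\rho_{t,r}\|_{L^\infty(\mu)}<\infty$ for all $t>1$ and all $r>0$, I would fix $t=2$ and seek to control $\big\|\sup_{r>0}\rho_{2,r}\big\|_{L^\infty(\mu)}$, after which the characterization of doubling a.e.\ (together with the iteration argument in $t$ that it carries, converting control of $\rho_{2,\cdot}$ into control of $\rho_{t,\cdot}$ for every $t>1$) closes the proof. The idea is to use that the hypothesis is available at \emph{every} scale simultaneously, together with the monotonicity of $r\mapsto\mu(B(x,r))$ and the full-support and $\tau$-additivity properties of $\mu$ (so that the essential suprema are governed by the values on $\operatorname{supp}\mu$), in order to aggregate the per-scale bounds into a bound on the pointwise supremum over $r$. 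The main obstacle I anticipate is precisely this aggregation: interchanging the pointwise supremum over the scales $r$ with the essential supremum over the base point $x$, that is, upgrading the separate finiteness of each $\|\rho_{2,r}\|_{L^\infty(\mu)}$ to the finiteness of $\big\|\sup_{r>0}\rho_{2,r}\big\|_{L^\infty(\mu)}$. I expect this interchange to be the crux of the whole argument; once it is justified, the equivalence recorded before the statement yields that $\mu$ is doubling almost everywhere, completing the proof.
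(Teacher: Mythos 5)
Your forward implication (doubling a.e.\ implies $L^\infty(\mu)$-boundedness for every $t>1$ and $r>0$) is correct and coincides with the paper's route through Theorem \ref{prop2}. The problem is the converse, which you never actually prove: you reduce it to upgrading the finiteness of each individual $\|\rho_{2,r}\|_{L^\infty(\mu)}$, $r>0$, to the finiteness of $\bigl\|\sup_{r>0}\rho_{2,r}\bigr\|_{L^\infty(\mu)}$, correctly flag this interchange of suprema as the crux, and then stop (``once it is justified\dots''). This is a genuine gap, and moreover, in the form you pose it the step is unfixable: per-scale finiteness, with constants allowed to depend on $r$, does \emph{not} imply doubling almost everywhere. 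Concretely, take $X\subset\R$ consisting of the points $x_n=100^n$ and $y_n=100^n+4^n$, with $\mu\{x_n\}=4^{-n}$ and $\mu\{y_n\}=1$. For each fixed pair $(t,r)$ only the finitely many indices $n$ with $4^n\in(r,tr]$ produce a large (but finite) ratio $\rho_{t,r}(x_n)\approx 4^n$, while all other ratios are bounded in terms of $(t,r)$; hence $\|\rho_{t,r}\|_{L^\infty(\mu)}<\infty$ and every $A_{t,r,\mu}$ is bounded on $L^\infty(\mu)$ by Theorem \ref{prop2}. Yet $\sup_{r>0}\rho_{2,r}(x_n)\ge 4^n$ at each atom $x_n$, and atoms cannot be discarded by a $\mu$-null set, so $\mu$ is not doubling almost everywhere.

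What makes the corollary work --- and what the paper's one-line derivation from Theorem \ref{prop2} tacitly relies on --- is the \emph{uniform} reading of the hypothesis: for some (hence, by iteration, every) $t>1$, $\sup_{r>0}\|A_{t,r,\mu}\|_{L^\infty(\mu)\to L^\infty(\mu)}=\sup_{r>0}\|\rho_{t,r}\|_{L^\infty(\mu)}<\infty$. With a single constant $C$ valid at all scales, the aggregation you anticipate can be carried out, and this is the argument missing from your proposal: for each rational $q>0$ choose a null set $N_q$ off which $\rho_{2,q}\le C$, set $N=\bigcup_{q\in\Q^+}N_q$ (still null), and for $x\in\operatorname{supp}\mu\setminus N$ and arbitrary $r>0$ approximate $r$ monotonically by rationals $q_n$, using that $s\mapsto\mu(B(x,s))$ is left-continuous in $s$ for open balls (take $q_n\uparrow r$) and right-continuous for closed balls (take $q_n\downarrow r$), with denominators bounded away from zero since $x\in\operatorname{supp}\mu$; this yields $\rho_{2,r}(x)\le\liminf_n\rho_{2,q_n}(x)\le C$, hence $\bigl\|\sup_{r>0}\rho_{2,r}\bigr\|_{L^\infty(\mu)}\le C$ and $\mu$ is doubling a.e.\ by the characterization preceding the statement. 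So you should both adopt the uniform-in-$r$ hypothesis and supply this rational-scales continuity argument; as written, your proposal leaves the substantive half of the equivalence unproven, and the route you sketch for it cannot succeed from the per-scale hypothesis alone.
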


\end{document}